\begin{document}

\title[stochastic Cucker-Smale system]
{Flocking and non-flocking  behavior in a stochastic Cucker-Smale system}

\author[]{ Ta Viet Ton$^{\dagger}$, Nguyen Thi Hoai Linh$^{\ddagger}$, Atsushi Yagi$^*$$^{\dagger}$}  
\subjclass[2000]{60H10, 82C22}
\address{$^{\dagger}$ Department of Applied Physics, Graduate School of Engineering, Osaka University, Suita Osaka 565-0871, Japan}
\address{$^{\ddagger}$ Department of Information and Physical Science, Graduate School of Information Science and Technology, Osaka University, Suita Osaka 565-0871, Japan}

\keywords{Cucker-Smale model; flocking; aggregate motion; stochastic systems;  particle systems}

\thanks{$^*$This work is supported by Grant-in-Aid for Scientific Research (No. 20340035) of the Japan Society for the Promotion of Science.}


\begin{abstract}
We first present a new stochastic version of the Cucker-Smale model of the emergent behavior in flocks in which the mutual communication between individuals is affected by random factor.  Then,  the existence and uniqueness of global solution to this system are verified. We show a result which agrees with  natural fact  that under the effect of large noise, there is no flocking. In contrast, if noise is small, then  flocking may occur. Paper ends with some numerical examples. 
\end{abstract}

\maketitle
\numberwithin{equation}{section}
\theoremstyle{plain}
\newtheorem{theorem}{Theorem}[section]
\newtheorem{lemma}[theorem]{Lemma}
\newtheorem{proposition}[theorem]{Proposition}
\newtheorem{hypothesis}[theorem]{Hypothesis}
\newtheorem{corollary}[theorem]{Corollary}
\theoremstyle{definition}
\newtheorem{definition}[theorem]{Definition}
\newtheorem{remark}[theorem]{Remark}
\allowdisplaybreaks

\section{Introduction}
Flocking is a prevalent  behavior of most population in natural world such as bacteria, birds, fishes. It is also widespread  in  some phenomena in physics, for example  interacting oscillators. Recently, a number of articles proposed mathematical models for flocking behavior, to name a few  \cite{CS,CS1,FGLO,HTa,JLM,ROS,JSh,VCBO}. Vicsek and collaborators \cite{VCBO}  presented a model (for convenience, we call it Vicsek's model) and then studied flocking behavior via computer simulations. Some theoretical results on the convergence of that model can be found in \cite{JLM}. Based on the Vicsek's model, Cucker and Smale introduced a model for an $N$-particle system \cite {CS,CS1}. Then, some 
mathematicians called it the Cucker-Smale system \cite{AhH,CFRT,MT,HL,HL1,JSh}. We will recall  here some features of this system. We consider motion of  $N$ particles in the space $\mathbb R^d\, (d=1,2,3,\ldots)$. The position of the $i$-th particle is denoted by $x_i = x_i(t)\, (i=1,2,\ldots,N)$. Its velocity is denoted by $v_i = v_i(t)\, (i=1,2,\ldots,N)$.  The Cucker-Smale system is as follows 
\begin{equation}\label{E1}
\begin{cases}
x_i'=v_i ,\hspace{3cm} 1\leq i\leq N,\\
v_i'=\frac{1}{N} \sum_{j=1}^N\psi(||x_j-x_i||)(v_j-v_i),
\end{cases}
\end{equation}
Here, the weights $\psi(||x_j-x_i||)$ quantify the influence between $i$-th and $j$-th particles. This communication rate is a nonincreasing function $\psi\,{:}\,[0, \infty) \rightarrow [0,\infty)$  of the distances between particles.  This function has various forms. In \cite{CS,CS1}, $\psi(s)=\frac{K}{(c+ s^2)^\beta},$ while in \cite{HL, HL1},  $\psi(s)=\frac{K}{(c+ s^2)^\beta}$, $\psi(s)=\frac{K}{s^{2\beta}}$ or   $\psi(s)=\text{\rm constant}$. For such functions, it is shown that when $\beta <\frac{1}{2}$ the convergence of the velocities to a common velocity  is guaranteed, while for $\beta \geq \frac{1}{2}$ this convergence is guaranteed under some condition on the initial positions and velocities of particles. We call them unconditional flocking and conditional flocking, respectively. In the latter case, the result on the non-flocking for two particles on a line is also stated \cite {CS1}. 

We know that  real systems are often exposed to influences that are  incompletely understood. Therefore extending these deterministic models to ones that embrace more complicated variations is needed. A way of doing that is including stochastic influences or noise.
 Up to our knowledge, however, there are few such models which are studied theoretically  \cite{CM,HL1,AhH}. In \cite{CM}, Cucker and Mordecki modified the model \eqref{E1} in $\mathbb R^3$ by adding random noise to it
\begin{equation}\label{E1.11}
\begin{cases}
x_i'=v_i,\hspace{3cm} 1\leq i\leq N,\\
v_i'= \sum_{j=1}^N\psi(||x_j-x_i||)(v_j-v_i)+H_i,
\end{cases}
\end{equation}
Here communication rate $\psi$ has the same form as that in \eqref{E1}. $H_i(t)$ is a three-dimensional Gausian centered, stationary stochastic process, that satisfies a $\delta$\,{-}\,dependence condition for some $\delta>0$, i.e., two sets $\{H_i(s)|s\leq t\}$ and $\{H_i(s)| s\geq t+\delta\}$ are independent for each $t$.
  And $H_i(t)$ has $\mathcal C^0$ trajectories and independent coordinates. 
 The authors showed that  a conditional $\nu$\,{-}\,nearly flocking occurs in finite time with a confidence  which is similar to the conditional flocking of the deterministic system \eqref{E1}. 
That is, for $\nu>0$ small enough, there exists a time $T_0$ depending on $\nu$ and initial values $v_i(0)\, ( 1\leq i\leq N)$  such that for every $t\in [0, T_0)$, $\sum_{i,j} ||v_i(t)-v_j(t)||\leq \nu$ with a positive probability. This probability, however, is not one and the interval time for the occurring is not $[0,\infty)$. In other words, it is just nearly flocking, not flocking. In another approach, Ha and collaborators \cite{HL1, AhH} studied the following two Cucker-Smale systems with  the presence of white noise: 
\begin{equation}\label{E1.1}
\begin{cases}
dx_i=v_idt, \hspace{3cm} 1\leq i\leq N,\\
dv_i= \frac{\lambda}{N}\sum_{j=1}^N\psi(||x_j-x_i||)(v_j-v_i)dt+\sqrt{D}dw_i,
\end{cases}
\end{equation}
and
\begin{equation}\label{E1.2}
\begin{cases}
dx_i=v_idt, \hspace{3cm} 1\leq i\leq N,\\
dv_i=  \frac{\lambda}{N}\sum_{j=1}^N\psi(||x_j-x_i||)(v_j-v_i)dt+D(v_i-v_e)dw_i,
\end{cases}
\end{equation}
where $v_e$ is a constant vector. In \cite{HL1}, the result on the flocking behavior of system \eqref{E1.1} is exhibited when the communication rate is a constant. Furthermore, if the communication rate satisfies a lower bound condition, then the relative fluctuations of  velocities around a mean velocity have a uniformly bounded variance in time. In \cite{AhH}, by giving another definition for flocking which is relative to almost surely convergence, the authors showed flocking of system  \eqref{E1.2} which covers the case of the communication weight employed by Cucker-Smale.

We are interested in the communication rate term in system \eqref{E1}. What happens if this mutual communication is affected by random factor, i.e., $\psi(|x_j-x_i|) \leadsto \psi(|x_j-x_i|)$ + white noise? This motivates us to present and  study a Cucker-Smale system under the effect of a common white noise on the mutual communication. We will show that if the noise is large then there is no flocking. Even if the communication rate satisfies the unconditional flocking for \eqref{E1}, this still holds true. This is  different from the deterministic Cucker-Smale system \eqref{E1} but  is adaptive to  real situations. For example, under a strongly random effect of strong winds or water currents, birds or fishes will separate and can not make a flock or school. In contrast, we will show that if the noise is small enough, then flocking occurs. For the convenience of calculation, we will use the Stratonovich stochastic differential equations. Our model to study in this paper has the form
\begin{equation}\label{E3}
\begin{cases}
dx_i=v_i dt,\\
\hspace*{10cm} 1\leq i\leq N.\\
dv_i= \sum_{j=1}^N\psi(||x_j-x_i||)(v_j-v_i)dt +\sigma  \sum_{j=1}^N (v_j-v_i)\circ dw_t,
\end{cases}
\end{equation}
Here $\sigma>0$ is the strength of white noise and $\{w_t, t\geq 0\}$ is  one-dimensional Brownian motion  defined on a  complete probability space with  normal filtration $(\Omega, \mathcal F,\{\mathcal F_t\}_{t\geq 0},\mathbb P)$. We assume that  communication rate function  $\psi\,{:}\,[0, \infty) \rightarrow [0,\infty) $ is locally Lipschitz continuous.

The organization of the paper is as follows. In the next section, we prove the existence and uniqueness of global solution to \eqref{E3}. In Section 3,  non-flocking under the effect of large noise is shown. In contrast, in Section 4, we exhibit flocking under the effect of small noise. Finally, some numerical examples are presented in Section 5.
\section{Existence and uniqueness of global solution}
In this section, we shall prove global existence of solution for the system \eqref{E3}.
\begin{theorem} \label{Th1}
For any given initial values $(x_i(0), v_i(0))\in \mathbb R^{2d} \, (1\leq i\leq N),$ system \eqref{E3} has a unique and global solution.
\end{theorem}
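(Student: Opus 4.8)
The plan is to treat \eqref{E3} as an Itô stochastic system on $\mathbb R^{2dN}$ for the aggregated state $Z=(x_1,\dots,x_N,v_1,\dots,v_N)$ and to split the argument into a local part (existence and uniqueness up to a possible explosion time) and a global part (showing that explosion does not occur in finite time). First I would convert the Stratonovich equation to its equivalent Itô form. Since the diffusion coefficient in the $v_i$-equation, namely $\sigma\sum_{j=1}^N(v_j-v_i)=\sigma N(\bar v-v_i)$ with $\bar v=\frac1N\sum_j v_j$, is linear in the velocities, the Stratonovich--Itô correction is again linear in $v$ and therefore harmless; the corrected drift together with the communication term $\sum_j\psi(\|x_j-x_i\|)(v_j-v_i)$ is locally Lipschitz, because $\psi$ is locally Lipschitz and the remaining factors are polynomial. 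Hence the standard existence and uniqueness theorem for Itô equations with locally Lipschitz coefficients yields a unique maximal solution defined up to an explosion time $\tau_\infty=\lim_{n\to\infty}\tau_n$, where $\tau_n=\inf\{t\ge0:|Z(t)|\ge n\}$.

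The core of the proof is the a priori estimate ruling out finite-time blow up, i.e. $\tau_\infty=\infty$ almost surely. Here I would exploit two structural features. First, summing the $v_i$-equations and using the antisymmetry $\psi(\|x_j-x_i\|)(v_j-v_i)+\psi(\|x_i-x_j\|)(v_i-v_j)=0$ together with $\sum_{i,j}(v_j-v_i)=0$, the mean velocity $\bar v$ is conserved. Second, writing $u_i=v_i-\bar v$ and introducing the fluctuation energy $\Gamma=\sum_{i=1}^N\|u_i\|^2$, the Cucker--Smale drift contributes the dissipative term $-\tfrac12\sum_{i,j}\psi(\|x_j-x_i\|)\|u_i-u_j\|^2\le0$, while the common linear noise contributes exactly $-2\sigma N\,\Gamma\circ dw_t$. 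Thus $\Gamma$ obeys a closed scalar equation with nonpositive extra drift and purely multiplicative noise; passing to Itô form and taking expectations up to $\tau_n$ kills the martingale part and leaves $\frac{d}{dt}\mathbb E[\Gamma(t\wedge\tau_n)]\le 2\sigma^2N^2\,\mathbb E[\Gamma(t\wedge\tau_n)]$, so that $\mathbb E[\Gamma(t\wedge\tau_n)]\le\Gamma(0)e^{2\sigma^2N^2t}$ uniformly in $n$.

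From this bound the global existence follows in the usual way. Since $\sum_i\|v_i\|^2=N\|\bar v\|^2+\Gamma$ with $\bar v$ constant, the velocities are controlled in $L^2$, and because $dx_i=v_i\,dt$ the positions satisfy $\frac{d}{dt}\mathbb E\!\left[\sum_i\|x_i\|^2\right]\le \mathbb E\!\left[\sum_i\|x_i\|^2\right]+\mathbb E\!\left[\sum_i\|v_i\|^2\right]$, which Gronwall's inequality again bounds uniformly in $n$ on each interval $[0,T]$. Combining the two estimates gives $\mathbb E[|Z(t\wedge\tau_n)|^2]\le C_T$ with $C_T$ independent of $n$, whence $n^2\,\mathbb P(\tau_n\le T)\le C_T$ and $\mathbb P(\tau_n\le T)\to0$; letting $n\to\infty$ shows $\tau_\infty\ge T$ for every $T$, i.e. $\tau_\infty=\infty$ a.s., and uniqueness is inherited from the local solution.

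The step I expect to be the main obstacle is the global one, i.e. the a priori estimate, since the coefficients are only locally Lipschitz and need not be globally Lipschitz, so no off-the-shelf theorem applies directly. The decisive point is to notice that the fluctuation energy relative to the conserved mean velocity satisfies a self-contained equation whose drift is dissipative and whose noise is linear, which simultaneously tames the communication weight (only $\psi\ge0$ is used, not any bound on $\psi$) and the multiplicative stochastic forcing.
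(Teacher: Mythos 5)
Your proof is correct, but the mechanism you use to rule out explosion is genuinely different from the paper's. You share the same structural backbone: conservation of $\bar v$, and the closed scalar relation for the fluctuation energy $\Gamma=\sum_i||v_i-\bar v||^2$, whose Cucker--Smale drift is nonpositive and whose noise term is exactly $-2N\sigma\Gamma\circ dw_t$ (your factor $\tfrac12$ in front of the dissipative sum is off by two, but this is immaterial since that term is discarded anyway). From there the paper goes pathwise: it applies the Ikeda--Watanabe comparison theorem to the It\^o form of the equation for $||v||^2$, compares with the linear SDE $dV=2N^2\sigma^2V\,dt-2N\sigma V\,dw_t$, whose It\^o correction exactly cancels the drift so that $V(t)=V(0)e^{-2N\sigma w_t}$, and obtains the almost sure bounds $||v(t)||\le||v(0)||e^{-N\sigma w_t}$ and $||x(t)||\le||x(0)||+||v(0)||\int_0^te^{-N\sigma w_s}ds$; since Brownian paths are continuous, these are finite on every bounded interval, and $\tau=\infty$ follows directly from the definition of the explosion time. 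You instead run the standard Khasminskii-type localization argument: stop at $\tau_n$, take expectations so that the stochastic integral (whose integrand is bounded before $\tau_n$) vanishes, use Gronwall to get moment bounds uniform in $n$ for the velocities and then the positions, and finish with Chebyshev, $n^2\,\mathbb P(\tau_n\le T)\le C_T$. Your route is more robust and self-contained --- it needs only the It\^o formula, optional stopping and Gronwall, not a comparison theorem for one-dimensional SDEs, and it would still work if the scalar equation were not explicitly solvable. What the paper's route buys is stronger output: explicit pathwise (almost sure) estimates rather than second moments, and these are precisely the bounds recycled later, in Theorem \ref{Thm3} and especially Remark \ref{remark5}, where almost sure flocking along individual trajectories is read off from the exponent $-N\sigma w_t$ via the strong law of large numbers for Brownian motion.
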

\begin{proof}
Since the functions on the right side of \eqref{E3} are locally Lipschitz continuous on $\mathbb R^{2d},$  there is a unique solution $(x_i(t),v_i(t))\, (1\leq i\leq N)$ defined on an interval $[0,\tau),$ where $\tau\leq \infty$  and if $\tau<\infty$ it is an explosion time \cite{A,F}, i.e.,
$$\tau=\sup\{t\,{:}\,\sup_{s\in [0,t), 1\leq i\leq  N} [||x_i(s)||+||v_i(s)||]<\infty \}.$$
Put 
\begin{align*}
||x||=\sqrt{\sum_{i=1}^N ||x_i||^2}, \quad ||v||=\sqrt{\sum_{i=1}^N ||v_i||^2},
 \quad  \bar x=\frac{1}{N}\sum_{i=1}^N x_i,  \quad  \bar v=\frac{1}{N}\sum_{i=1}^N v_i.
\end{align*}
It follows from system \eqref{E3} that
\begin{equation*}
\begin{cases}
d\bar x=\bar v dt,\\
d\bar v={\bf 0}.
\end{cases}
\end{equation*}
Then almost surely $\bar x(t)=\bar x(0)+\bar v(0) t $ and $\bar v(t)=\bar v(0)$ for every $t\in [0, \tau).$ Without loss of generality, we may assume that $\bar x(0)=\bar v(0)={\bf 0}$. Then,
\begin{equation}\label{E6}
\sum_{i=1}^N x_i(t)=\sum_{i=1}^N v_i(t)={\bf 0} \hspace {2cm}\text{ a.s.}
\end{equation}
and
\begin{equation}\label{E7}
\sum_{i,j=1}^N ||v_i(t)-v_j(t)||^2=2N||v(t)||^2.
\end{equation}
It follows from the second equation of  \eqref{E3} and from the chain rule of Stratonovich stochastic differential equation that
\begin{equation}\label{E8}
\begin{aligned}
d||v||^2=&\sum_{i=1}^Nd||v_i||^2\\
=&2 \sum_{i=1}^N <v_i,dv_i>\\
=&2 \sum_{i,j=1}^N\psi(||x_j-x_i||)<v_i, v_j-v_i>dt +2 \sigma \sum_{i,j=1}^N <v_i, v_j-v_i> \circ dw_t.
\end{aligned}
\end{equation}
We have
\begin{align*}
  \sum_{i,j=1}^N <v_i, v_j-v_i> &= \sum_{i, j=1}^N <v_i-v_j, v_j-v_i>+ \sum_{i, j=1}^N <v_j, v_j-v_i>\\
&=-\sum_{i, j=1}^N||v_i-v_j||^2- \sum_{i, k=1}^N <v_k, v_i-v_k>,
\end{align*}
which induces
\begin{align} \label{E9}
2\sum_{i,j=1}^N <v_i, v_j-v_i>&=-\sum_{i, j=1}^N||v_i-v_j||^2\notag\\
&=-2N ||v||^2 \quad  (\text{ see } \eqref{E7}).
\end{align}
Furthermore, it follows from
\begin{align*}
\sum_{i,j=1}^N\psi(||x_j-x_i||)<v_i, v_j-v_i> =&-\sum_{i,j=1}^N\psi(||x_j-x_i||)||v_i-v_j||^2\\
&+\sum_{i,j=1}^N\psi(||x_j-x_i||)<v_j, v_j-v_i>
\end{align*}
that
\begin{align} \label{E10}
2\sum_{i,j=1}^N\psi(||x_j-x_i||)<v_i, v_j-v_i> =-\sum_{i,j=1}^N\psi(||x_j-x_i||)||v_i-v_j||^2.
\end{align}
Thus, by \eqref{E8}-\eqref{E10}, we obtain
\begin{equation*}
\begin{aligned}
d||v||^2
=- \sum_{i,j=1}^N\psi(||x_j-x_i||)||v_i-v_j||^2dt - 2N\sigma ||v||^2\circ dw_t.
\end{aligned}
\end{equation*}
Or, equivalently, in the It\^o form:
\begin{equation}\label{E11}
\begin{aligned}
d||v||^2
=\left [- \sum_{i,j=1}^N\psi(||x_j-x_i||)||v_i-v_j||^2+2N^2\sigma^2 ||v||^2\right]dt - 2N\sigma ||v||^2 dw_t.
\end{aligned}
\end{equation}
Hence, by using the comparison theorem  \cite{WI}, it follows from \eqref{E11} that for every $t\geq 0$, $||v(t)||^2\leq V(t) $ a.s., where $V(t)$ satisfies the following equation
\begin{equation*}
\begin{cases}
dV=2N^2\sigma^2 Vdt - 2N\sigma V dw_t,\\
V(0)=||v(0)||^2.
\end{cases}
\end{equation*}
This linear equation  has a unique global solution $V(t)=V(0)e^{-2N\sigma w_t }.$ Thus for every $t\in [0,\tau)$
\begin{equation}\label{E12}
 ||v(t)||\leq ||v(0)||e^{-N\sigma w_t }\hspace{2cm} \text{a.s. }
\end{equation}
Then, from the first equation of \eqref{E3} we have almost surely
\begin{equation*}
\begin{aligned}
d||x||^2&=\sum_{i=1}^Nd||x_i||^2=2\sum_{i=1}^N <x_i, v_i> dt\\
&\leq 2 ||x|| \, ||v|| dt\leq 2||v(0)||e^{-N\sigma w_t } ||x||dt.
\end{aligned}
\end{equation*}
By the comparison theorem, we obtain $||x(t)||^2 \leq X(t)$ for all $t\geq 0$, where $X(t)$ satisfies the following equation
\begin{equation*}
\begin{cases}
dX=2||v(0)||e^{-N\sigma w_t } \sqrt{X}dt,\\
X(0)=||x(0)||^2>0.
\end{cases}
\end{equation*}
Since $X(t)=[||x(0)|| +||v(0)||\int_0^te^{-N\sigma w_s }ds]^2,$ then for every  $t\in [0,\tau)$ 
\begin{equation}\label{E13}
 ||x(t)||\leq ||x(0)|| +||v(0)||\int_0^te^{-N\sigma w_s }ds \hspace {2cm}\text{ a.s.}
\end{equation}
From \eqref{E12},  \eqref{E13} and the definition of $\tau$, we see that $\tau=\infty$ a.s. It means that the solution to  \eqref{E3} is unique and global.
\end{proof}
\section{Non-flocking under large noise }
In this section, we will show that under the effect of large noise on particles, i.e., $\sigma$ is large, then the system \eqref{E3} does not flock. First, we give a definition for flocking. Then non-flocking theorem is shown.
\begin{definition} \label{def1}
The state of particles $(x_i(t), v_i(t)) \, (1\leq i\leq N)$  in system \eqref{E3} has  a {\it time-asymptotic flocking} if, for $1\leq i, j\leq N$, the velocity alignment and group forming in the following senses, respectively,  are satisfied
\begin{itemize}
  \item [1.] $ \lim_{t\to \infty}\mathbb E ||v_i-v_j||^2=0.$
\item [2.]  $\sup_{0\leq t<\infty} \mathbb E ||x_i-x_j||<\infty.$
\end{itemize}
\end{definition}
Put $\alpha=\sup_{s\geq 0} \psi(s). $ Throughout this section, we assume that the communication rate satisfies an upper bound condition, i.e., $\alpha<\infty$. Note that  the communication rate in Cucker-Smale system \cite{CS,CS1} satisfies this condition. 
\begin{theorem}[Non-flocking theorem]\label{thm2}
If $\sigma > \sqrt{\frac{\alpha}{N}}$ then the particles do not flock.
\end{theorem}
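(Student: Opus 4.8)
The plan is to reuse the evolution equation \eqref{E11} for $\|v\|^2$ that was derived in the proof of Theorem~\ref{Th1}, but now to bound its drift from \emph{below} instead of above. Since $\psi\le\alpha$ everywhere, the identity \eqref{E7} gives
$$\sum_{i,j=1}^N \psi(\|x_j-x_i\|)\,\|v_i-v_j\|^2 \le \alpha\sum_{i,j=1}^N\|v_i-v_j\|^2 = 2N\alpha\,\|v\|^2,$$
so the It\^o drift appearing in \eqref{E11} is at least $2N(N\sigma^2-\alpha)\|v\|^2$. Consequently $\|v\|^2$ dominates, in the sense of the comparison theorem \cite{WI}, the solution $U$ of the linear equation $dU = 2N(N\sigma^2-\alpha)U\,dt - 2N\sigma U\,dw_t$ with $U(0)=\|v(0)\|^2$, which shares the same diffusion coefficient $-2N\sigma(\cdot)$. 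This is the exact mirror of the upper bound argument carried out for $V$ in Theorem~\ref{Th1}, only with the inequality reversed, so that $\|v(t)\|^2\ge U(t)$ a.s.

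Next I would solve this linear SDE explicitly, obtaining
$$U(t)=\|v(0)\|^2\exp\bigl(-2N\alpha t - 2N\sigma w_t\bigr),$$
and then take expectations. Using $\mathbb E\,e^{-2N\sigma w_t}=e^{2N^2\sigma^2 t}$, this gives
$$\mathbb E\,U(t)=\|v(0)\|^2\,e^{2N(N\sigma^2-\alpha)t}.$$
The hypothesis $\sigma>\sqrt{\alpha/N}$ is precisely the statement $N\sigma^2-\alpha>0$, so the exponent is positive and $\mathbb E\,U(t)\to\infty$ as $t\to\infty$ whenever the initial velocities are not all equal, i.e.\ $\|v(0)\|\neq 0$. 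Since $\|v(t)\|^2\ge U(t)$ a.s., it follows that $\mathbb E\,\|v(t)\|^2\to\infty$, and hence by \eqref{E7}
$$\sum_{i,j=1}^N\mathbb E\,\|v_i(t)-v_j(t)\|^2 = 2N\,\mathbb E\,\|v(t)\|^2\to\infty.$$
Therefore $\mathbb E\,\|v_i-v_j\|^2$ cannot tend to $0$ for every pair $(i,j)$, so condition~1 of Definition~\ref{def1} fails and the particles do not flock.

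I expect the main obstacle to be the careful justification of the comparison step, since the true drift of $\|v\|^2$ is not a function of $\|v\|^2$ alone but also depends on the positions through $\psi$. The resolution is to invoke the comparison theorem in the one-sided form in which only the lower comparison process $U$ has smooth linear coefficients, while the dominating process $\|v\|^2$ need only satisfy the stated pathwise drift inequality with a matching diffusion coefficient; this is exactly the mechanism already used in Theorem~\ref{Th1}. I would also take the expectation directly from the explicit formula for $U$ rather than from \eqref{E11}, so as to sidestep the integrability question of whether the stochastic integral is a genuine martingale. The only remaining point needing a remark is the degenerate configuration $\|v(0)\|=0$, in which all particles already share a common velocity and trivially flock; this case is excluded, so the theorem is understood for non-aligned initial velocities.
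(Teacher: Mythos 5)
Your proof is correct and follows essentially the same route as the paper: bound the drift in \eqref{E11} below by $2N(N\sigma^2-\alpha)\|v\|^2$ using $\psi\le\alpha$ and \eqref{E7}, compare with the linear SDE $dV_1=2N(N\sigma^2-\alpha)V_1\,dt-2N\sigma V_1\,dw_t$, and conclude $\mathbb{E}\|v(t)\|^2\ge \mathbb{E}V_1(t)=\|v(0)\|^2e^{2N(N\sigma^2-\alpha)t}\to\infty$. The only (harmless) differences are that the paper computes $\mathbb{E}V_1$ directly from the comparison SDE rather than from the explicit geometric-Brownian-motion solution, and it handles the degenerate case by simply imposing $\|v(0)\|^2>0$ in the initial condition where you flag it explicitly.
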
 
\begin{proof}
As we see in the proof of Theorem \ref{Th1} that without loss of generality, we can assume that $\bar x(0)=\bar v(0)={\bf 0}.$ Then  \eqref{E6} and \eqref{E7} hold true. It follows from \eqref{E7} and \eqref{E11} that
\begin{equation*}
\begin{aligned}
d||v||^2
=& \sum_{i,j=1}^N[N\sigma^2 -\psi(||x_j-x_i||)]||v_i-v_j||^2dt - 2N\sigma ||v||^2 dw_t\\
\geq &\sum_{i,j=1}^N(N\sigma^2-\alpha)||v_i-v_j||^2dt - 2N\sigma ||v||^2 dw_t\\
=&2N(N\sigma^2-\alpha)||v||^2 dt - 2N\sigma ||v||^2 dw_t.
\end{aligned}
\end{equation*}
By using the comparison theorem, for every $t\geq 0$ we have 
$||v(t)||^2\geq V_1(t) $ a.s., where $V_1(t)$ is a unique solution of the following equation
\begin{equation*}
\begin{cases}
dV_1=2N (N\sigma^2-\alpha)V_1 dt - 2N\sigma V_1 dw_t,\\
V_1(0)=||v(0)||^2>0.
\end{cases}
\end{equation*}
Since $d\mathbb E V_1=2N(N\sigma^2-\alpha)\mathbb E V_1 dt,$ we have $\mathbb E V_1(t)=||v(0)||^2 e^{2N(N\sigma^2-\alpha)t}\rightarrow \infty$ as $t\rightarrow \infty$. Therefore, $\mathbb E ||v(t)||^2\rightarrow \infty$ as $t\rightarrow \infty$. It means that the particles do not  flock.
\end{proof}
\section{Flocking under small noise }
In this section, we consider the system \eqref{E3} under the effect of small noise, i.e., $\sigma$ is small. We will show that flocking takes place under a lower bound condition of the communication rate.
\begin{theorem}[Flocking theorem]\label{Thm3}
Assume that there exists $ \psi^*>0$ such that
$$\inf_{s\geq 0} \psi (s)\geq \psi^*.$$
If $\sigma<\sqrt{\frac{ \psi^*}{N}}$ then particles flock under any initial values $(x_i(0), v_i(0)) \in \mathbb R^{2d}\, (1\leq i\leq N)$.
\end{theorem}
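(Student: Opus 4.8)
The plan is to mirror the structure of the non-flocking proof but to run the comparison inequality in the opposite direction. By the computations carried out in the proof of Theorem~\ref{Th1}, and after the normalization $\bar x(0)=\bar v(0)=\mathbf 0$, the squared velocity spread obeys the It\^o equation \eqref{E11}. Using the lower bound $\psi(\|x_j-x_i\|)\geq\psi^*$ together with \eqref{E7}, one gets
\begin{equation*}
d\|v\|^2 \leq \sum_{i,j=1}^N\bigl[N\sigma^2-\psi^*\bigr]\|v_i-v_j\|^2\,dt - 2N\sigma\|v\|^2\,dw_t
= 2N(N\sigma^2-\psi^*)\|v\|^2\,dt - 2N\sigma\|v\|^2\,dw_t.
\end{equation*}
The hypothesis $\sigma<\sqrt{\psi^*/N}$ makes the drift coefficient $2N(N\sigma^2-\psi^*)$ strictly negative, so the natural next step is to invoke the comparison theorem \cite{WI} to dominate $\|v(t)\|^2$ by the solution $V_2(t)$ of the linear equation $dV_2=2N(N\sigma^2-\psi^*)V_2\,dt-2N\sigma V_2\,dw_t$ with $V_2(0)=\|v(0)\|^2$.

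For the velocity-alignment condition I would take expectations. Since the $dw_t$ term is a martingale, $d\,\mathbb E V_2=2N(N\sigma^2-\psi^*)\,\mathbb E V_2\,dt$, whence $\mathbb E V_2(t)=\|v(0)\|^2 e^{2N(N\sigma^2-\psi^*)t}\to 0$ as $t\to\infty$. Combining with the comparison inequality and \eqref{E7}, which gives $\mathbb E\|v_i-v_j\|^2\leq\sum_{k,l}\mathbb E\|v_k-v_l\|^2=2N\,\mathbb E\|v\|^2\leq 2N\,\mathbb E V_2(t)$, yields $\lim_{t\to\infty}\mathbb E\|v_i-v_j\|^2=0$, which is condition~1 of Definition~\ref{def1}.

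The group-forming condition~2 is where the real work lies, and I expect it to be the main obstacle. The difficulty is that controlling $\mathbb E\|x_i-x_j\|$ requires integrating the velocity spread in time, and the exponential-martingale factor $e^{-2N\sigma w_t}$ appearing in $V_2(t)$ has heavy tails, so one must be careful about which moments actually stay bounded. The plan is to start from $\frac{d}{dt}\|x\|\leq\|v\|$ (so $\|x(t)\|\leq\|x(0)\|+\int_0^t\|v(s)\|\,ds$), take expectations, and use the comparison bound to write $\mathbb E\|v(s)\|\leq\bigl(\mathbb E\,V_2(s)\bigr)^{1/2}$ via Jensen. A cleaner route is to note $V_2(t)=\|v(0)\|^2\exp\bigl(2N(N\sigma^2-\psi^*)t-2N\sigma w_t\bigr)$, so that $\mathbb E\sqrt{V_2(t)}=\|v(0)\|\,\mathbb E\exp\bigl(N(N\sigma^2-\psi^*)t-N\sigma w_t\bigr)=\|v(0)\|\,e^{N(N\sigma^2-\psi^*)t}\,e^{N^2\sigma^2 t/2}$ after computing the Gaussian expectation of the exponential martingale. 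The key check is that the resulting exponent $N(N\sigma^2-\psi^*)+\tfrac12 N^2\sigma^2=\tfrac32 N^2\sigma^2-N\psi^*$ is negative, i.e.\ that $\sigma<\sqrt{2\psi^*/(3N)}$; if the stated threshold $\sigma<\sqrt{\psi^*/N}$ does not by itself force this, one would need a sharper estimate of $\mathbb E\|v(s)\|$ (for instance working directly with the $\|v\|$-equation rather than $\|v\|^2$, or exploiting a lower noise bound) to obtain an integrable, time-uniform bound on $\int_0^\infty\mathbb E\|v(s)\|\,ds$. Once that integral is finite, $\sup_{t}\mathbb E\|x_i-x_j\|\leq\sup_t\bigl(2N\bigr)^{1/2}\mathbb E\|x(t)\|<\infty$ follows immediately, completing condition~2 and hence the proof.
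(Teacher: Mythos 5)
Your overall strategy is the paper's: comparison of $\|v\|^2$ with a linear SDE $V_2$, expectations for the velocity-alignment condition, and then an explicit solution of the linear equation plus a Gaussian exponential moment for the group-forming condition. (Your drift $2N(N\sigma^2-\psi^*)$ is in fact the correct coefficient; the paper's $(N\sigma^2-\psi^*)$ drops a factor $2N$, which is harmless since both are negative under the hypothesis.) Condition 1 of Definition~\ref{def1} is handled correctly. The problem is in your treatment of condition 2, and it is a concrete computational error, not a genuine obstacle in the theorem. The explicit solution of the It\^o equation $dV_2=aV_2\,dt+bV_2\,dw_t$ is $V_2(t)=V_2(0)\exp\bigl[(a-\tfrac{b^2}{2})t+bw_t\bigr]$; you wrote $V_2(t)=\|v(0)\|^2\exp\bigl(2N(N\sigma^2-\psi^*)t-2N\sigma w_t\bigr)$, omitting the It\^o correction $-\tfrac{b^2}{2}t=-2N^2\sigma^2 t$. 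With the correction, the $\sigma^2$ terms cancel exactly:
\begin{equation*}
V_2(t)=\|v(0)\|^2\exp\bigl(-2N\psi^*t-2N\sigma w_t\bigr),
\qquad
\mathbb E\sqrt{V_2(t)}=\|v(0)\|\,e^{\left(\frac{1}{2}N^2\sigma^2-N\psi^*\right)t},
\end{equation*}
and the exponent is negative whenever $\sigma<\sqrt{2\psi^*/N}$, which is weaker than the stated threshold $\sigma<\sqrt{\psi^*/N}$. So the condition $\sigma<\sqrt{2\psi^*/(3N)}$ you arrived at, and the ensuing worry that a ``sharper estimate'' is needed, are artifacts of the dropped term; the route you called cleaner closes the proof exactly as the paper does (the paper's exponent $\tfrac12[(N-2N^2)\sigma^2-\psi^*]$ visibly contains the correction $-2N^2\sigma^2$, and after multiplying by $\mathbb E e^{-N\sigma w_s}=e^{N^2\sigma^2 s/2}$ the total exponent stays negative).

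Ironically, the route you mentioned first and then set aside is even more direct and needs no pathwise formula at all: by Jensen and the comparison bound,
\begin{equation*}
\mathbb E\|v(s)\|\leq\sqrt{\mathbb E\,V_2(s)}=\|v(0)\|\,e^{N(N\sigma^2-\psi^*)s},
\end{equation*}
which is integrable on $[0,\infty)$ precisely when $\sigma<\sqrt{\psi^*/N}$, so $\sup_t\mathbb E\|x(t)\|\leq\|x(0)\|+\|v(0)\|/[N(\psi^*-N\sigma^2)]<\infty$ and condition 2 follows. Either route, carried through correctly, completes your argument under the stated hypothesis; as submitted, however, the proposal ends inconclusively on the group-forming condition, so it does not constitute a proof.
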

\begin{proof}
Similar to  Theorem \ref{thm2}, we can assume that $\bar x(0)=\bar v(0)={\bf 0}.$ Then   \eqref{E6} and \eqref{E7} hold true. By \eqref{E7} and \eqref{E11}, we have
\begin{equation*}
\begin{aligned}
d||v||^2
\leq &\left[-\sum_{i, j}^N\psi^*||v_i-v_j||^2+2N^2\sigma^2 ||v||^2\right ]dt - 2N\sigma ||v||^2 dw_t\\
=&\big (N\sigma^2-\psi^*\big )||v||^2dt - 2N\sigma ||v||^2 dw_t.
\end{aligned}
\end{equation*}
By using the comparison theorem, it follows that
$||v(t)||^2\leq V_2(t)$ a.s., where $V_2(t)$ satisfies the following equation
\begin{equation}\label{E16}
\begin{cases}
dV_2=\big (N\sigma^2-\psi^*\big )V_2dt - 2N\sigma V_2 dw_t,\\
V_2(0)=||v(0)||^2.
\end{cases}
\end{equation}
Since $d\mathbb E V_2=\big (N\sigma^2-\psi^*\big )\mathbb E V_2dt$ and $\sigma<\sqrt{\frac{ \psi^*}{N}},$ we have
$$\mathbb E||v(t)||^2 \leq E V_2(t)=||v(0)||^2 e^{ (N\sigma^2-\psi^* )t} \rightarrow 0 \text{   as } t\rightarrow \infty.$$
On the other hand, the linear equation \eqref{E16} has an explicit solution
$$V_2(t)=||v(0)||^2 e^{[(N-2N^2)\sigma^2-\psi^* ]t-2N\sigma w_t}.$$
Thus,
\begin{equation} \label{E16.1}
||v(t)||\leq ||v(0)||e^{\frac{1}{2}\big[(N-2N^2)\sigma^2-\psi^*\big]t-N\sigma w_t}. 
\end{equation}
In the prove of Theorem \ref{Th1}, we showed that  $d||x||^2\leq 2 ||x|| ||v|| dt$. Then by using the comparison theorem and  \eqref{E16.1}, it is easy to obtain that
$$||x(t)||\leq ||x(0)||+ ||v(0)|| \int_0^t e^{\frac{1}{2}[(N-2N^2)\sigma^2-\psi^* ]s-N\sigma w_s}ds.$$
Thus,
\begin{equation*}
\begin{aligned}
\mathbb E ||x(t)|| \leq &||x(0)||+||v(0)||\int_0^t e^{\frac{1}{2}[(N-2N^2)\sigma^2-\psi^*]s}\mathbb Ee^{-N\sigma w_s}ds\\
=&||x(0)||+||v(0)||\int_0^t e^{\frac{1}{2}[(N-2N^2)\sigma^2-\psi^*]s}e^{\frac{1}{2}N^2\sigma^2 s}ds\\
=&||x(0)||+||v(0)||\int_0^t e^{\frac{1}{2}[(N-N^2)\sigma^2-\psi^*]s}ds\\
=&||x(0)||+||v(0)||\frac{2}{(N-N^2)\sigma^2-\psi^*}\left[e^{\frac{1}{2}[(N-N^2)\sigma^2-\psi^*]t}-1\right ].
\end{aligned}
\end{equation*}
Therefore, $\sup_{t\geq 0} \mathbb E ||x(t)||<\infty.$ From the above results, we conclude that the particles flock.
\end{proof}
\begin{remark}
The lower bound condition has been used for the system \eqref{E1.1} by Ha et al.    \cite{HL1}. The authors showed that when this condition holds true, the term $\mathbb \sum_{i,j}E ||v_i-v_j||^2$ is uniformly bounded in $t.$ Furthermore, when  $\psi=1$,  flocking of particles takes place. However, when $\psi$ satisfies the lower bound condition but is not a constant, the flocking result has not yet obtained in \cite{HL1}.
\end{remark}
\begin{remark} \label{remark5}
The above flocking result is relative to  the expectation (average)  of  solution. We can obtain a result on flocking of almost surely convergence that is valid for individual trajectory. To see that, firstly, we change the first condition in Definition \ref{def1} by $\lim_{t\to\infty} ||v_i-v_j||=0$ a.s. Now, it follows from \eqref{E16.1} that 
\begin{equation}\label{E19}
||v(t)||\leq ||v(0)||e^{t\big[\frac{1}{2}(N-2N^2)\sigma^2-\psi^*-N\sigma \frac{w_t}{t}\big]}.
\end{equation}
By the strong law of large numbers for Brownian motion \cite{IS},  $\frac{w_t}{t}\rightarrow 0$ as $t\rightarrow \infty$ a.s. Thus $\lim_{t\to  \infty} ||v(t)|| =0.$ It then follows from \eqref{E7} that for $1\leq i\leq N$, 
\begin{equation} \label{E20}
\lim_{t\to\infty} ||v_i-v_j||=0 \hspace {2cm} \text{ a.s.}
\end{equation}
 It means that  flocking occurs. Furthermore, it is easy to see from \eqref{E19} that  \eqref{E20} still holds true for the case $\sigma>0, \psi^*=0$.  This means that flocking occurs without  the assumptions on the positivity of $\psi^*$ and upper boundedness of $\sigma$. Consequently,  unconditional flocking  occurs not only for the communication rate $\psi$ in \cite{HL1} but for those in \cite{CS,CS1,CM}.
\end{remark}
\section{Numerical examples}
In this section, we present some results of numerical simulations based on Euler's method for the system \eqref{E3}. First, we give examples for non-flocking, second, examples for flocking.
\subsection{Non-flocking} Let us first observe an example that shows that the system 
\eqref{E3} does not flock.

 Set $\alpha=1, \sigma=0.3$  and $\psi(s)=\frac{\alpha}{(1+s^2)^{0.25}}$. We compute $100$ trajectories of solution of  \eqref{E3} in $\mathbb R^2$ with $N=50$ and  initial values $(x_i(0),v_i(0)) $ are randomly generated in $[0,0.1]^4$. Figure 1 illustrates behavior of function $f(t)=\sum_{i<j} \mathbb E ||v_i(t)-v_j(t)||^2$ up to time $T=0.2.$ We see that values of function $f(t)$ are very large even at small time $t$. 
\begin{figure}[h]
\includegraphics[scale=0.5]{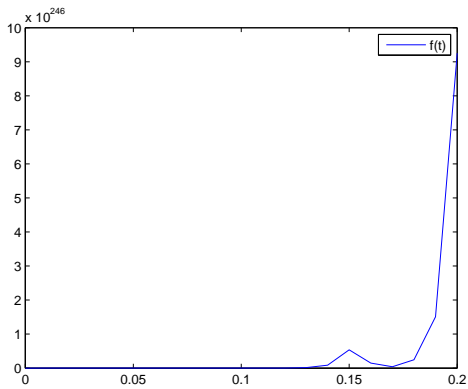}
 \caption{ Non-flocking in $2$-dim. space}
\end{figure}
\begin{figure}[h]
\includegraphics[scale=0.65]{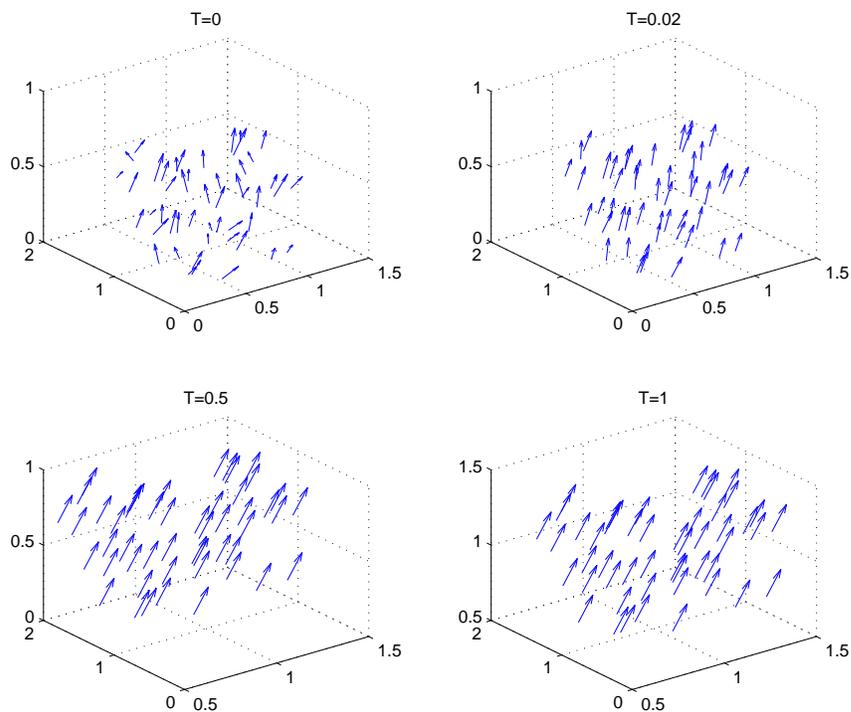}
 \caption{ Flocking in $3$-dim. space}
\end{figure}
\subsection{Flocking} Let us next observe another example showing flocking. Set $\alpha=1, \sigma=0.05$  and $\psi(s)=1.$
 Initial values $(x_i(0),v_i(0))$  are generated randomly in $[0,1]^6$. Figure 2 shows a flock of $50$ particles at $T=0, 0.02, 0.5, 1$. Each vector shows position and direction of motion of each particle.  The lengths of vectors  represent the magnitudes of velocity vectors of particles.
\section{Conclusion}
This paper presented a new stochastic version of the Cucker-Smale model in which the mutual communication between individuals is affected by the common random factor. This model can find its applications  in real world because  almost every real phenomena are  subject to environmental noises.  Under the upper bound condition of communication rate, we show that if the noise is large then the particles can not flock. This is consistent with some situations in nature, for example, under a strongly random effect, there is no flock. In contrast, under the lower bound condition of communication rate and small noise, flocking occurs. The paper ends with some numerical examples of both flocking and non-flocking.

Using the sense of flocking stated in Remark \ref{remark5},   unconditional flocking takes place. However,  a remaining interesting problem for the stochastic Cucker-Smale model is to obtain  flocking results in the sense of Definition \ref{def1}. By numerical simulations, we predict that flocking in this case does not only depend on small noise but on some certain domain of initial values $(x_i(0),v_i(0)).$ This problem is left for our future research.

\end{document}